\def\e{\hbox {\rm e}}
\renewcommand{\Re}{{\rm I\kern-0.16em R}}
\def\@begintheorem#1#2{\trivlist \item[\hskip \labelsep{\bf #1\ #2}]}
\def\@opargbegintheorem#1#2#3{\trivlist
      \item[\hskip \labelsep{\bf #1\ #2\ (#3)}]}
\newtheorem{proposition}{Proposition}[section] 
\newtheorem{thm}[proposition]{Theorem}
\newtheorem{corollary}[proposition]{Corollary}
\newtheorem{example}[proposition]{Example}
\newtheorem{remark}[proposition]{Remark}
\def\e{\hbox {\rm e}}
\def\P{{\bf P}}
\def\R{{\bf R}}
\def\R{{\bf R}}
\def\E{{\bf E}}
\def\cM{{\cal M}}
\numberwithin{equation}{section}
\begin{document}

\author{
Paavo Salminen
\\{\small \AA bo Akademi,}
\\{\small Mathematical Department,}
\\{\small F\"anriksgatan 3 B,}
\\{\small FIN-20500 \AA bo, Finland,} 
\\{\small email: phsalmin@abo.fi}
}
\title{Optimal stopping, Appell polynomials and Wiener-Hopf factorization}
\date{}
\maketitle
\begin{abstract}
In this paper we study the optimal stopping problem for L\'evy processes studied by Novikov and Shiryayev 
in \cite{novikovshiryaev04}. In particular, we are interested in finding the representing measure of the value function. It is seen that that this can be expressed in terms of the Appell polynomials. An important tool in our approach and computations is the Wiener-Hopf factorization.
\end{abstract} 

\noindent
{\bf Keywords}: optimal stopping problem, L\'evy process, Wiener-Hopf factorization, Appell polynomial.

\noindent
{\bf AMS Classification}: 60G40, 60J25, 60J30, 60J60, 60J75.

\newpage
\section{Introduction}
\label{sec0}
Let $X=\{X_t\,:\, t\geq 0\}$ denote a real-valued L\'evy process and $ \P_x $ the probability measure associated with $X$ when initiated from $ x $. Furthermore, $ \lbrace{\cal F}_t\rbrace $
denotes the natural filtration generated by $X$ and  ${\cM}$ the set of all 
stopping times $\tau$ with
respect to $ \lbrace{\cal F}_t\rbrace. $

We are interested in the following optimal stopping problem:
\\
\\
 {\sl Find a function $V$ and a stopping time $\tau^*$ such that
\begin{equation}
\label{osp}
V(x)=\sup_{\tau\in{\cM}}\E_x({\rm e}^{-r\tau}g(X_\tau))=\E_x({\rm e}^{-r\tau^*}g(X_\tau^*)),
\end{equation}
where $g(x):=(x^+)^n,\, n=1,2,\dots,$ and $ r\geq 0. $}
\\
\\
It is striking that the solution of this problem  can be characterized fairly explicitly for a general L\'evy process whose L\'evy measure satisfies some integrability conditions. 
This solution is essentially due to 
Novikov and Shiryaev
\cite{novikovshiryaev04} who found it for random walks. Construction was lifted to the framework of L\'evy processes by Kyprianou and Surya \cite{kyprianousurya05} (see also Kyprianou \cite{kyprianou06} Section 9). In Novikov and Shiryayev \cite{novikovshiryayev07} the corresponding stopping problem with arbitrary power $\gamma>0,$ i.e., $g(x):=(x^+)^\gamma$ is analyzed. For related problems for random walks and L\'evy processes, see Darling et al. \cite{darlingliggettaylor72}, and Mordecki \cite{mordecki02}.

The so called Appell polynomials (in  \cite{novikovshiryayev07} a more general concept of Appell function is introduced) play a central role in the development directed by Novikov and Shiryayev. 
In the solution, the function $V$ is described as the expectation of a function of the maximum of the L\'evy process up to an independent exponential time $T.$ In the paper by Salminen and Mordecki  \cite{mordeckisalminen07} the appearance of a function of maximum is explained via the Wiener-Hopf-Rogozin factorization of L\'evy processes and the Riesz decomposition and representation  of excessive functions. 

The aim of this note is to study the Novikov-Shiryayev solution in light of the results in 
\cite{mordeckisalminen07}. In particular, we focus on finding the representing measure of the excessive function $V.$ In the spectrally positive case the representing measure has a clean expression in terms of the Appell polynomials  of $X_T.$ In the general case we are able to find the Laplace transform of the representing measure but unable to find a "nice" inversion. If $X$ is spectrally negative, this Laplace transform is expressed in terms of the L\'evy-Khintchine exponent of $X.$

In the next section basic properties and examples of Appell polynomials are discussed. Therein is a short subsection on L\'evy processes where we present notation, assumptions and features central and important for the application in optimal stopping. In Section 3 -  to make the paper more readable - the results from \cite{novikovshiryaev04} and \cite{mordeckisalminen07} are shortly recalled. After this we proceed with the main results of the paper concerning the  representing measure of $V.$ 

\section{Appell polynomials}
\label{sec1}

\subsection{Basic properties}
\label{sec11}

Let $\eta$ be a random variable with some exponential moments, i.e., there exists $u>0$ such that
$$
\E\left(\e^{u\vert \eta\vert}\right)<\infty.
$$
A family of polynomials $\{Q^{(\eta)}_k\,;\,k=0,1,2,\dots\}$, the Appell polynomials associated with $\eta,$ are defined via
\begin{equation}
\label{appell}
\frac{\e^{ux}}{\E\left(\e^{u\eta}\right)}=\sum_{k=0}^\infty \frac{u^k}{k!} Q^{(\eta)}_k(x).
\end{equation} 
Putting here $x=\eta +z$ and taking expectations we obtain easily the so called mean value property of 
the Appell polynomials
\begin{equation}
\label{appell1}
\E\left( Q^{(\eta)}_k(\eta+z)\right)=z^k.
\end{equation} 
Writing
\[ 
\frac{\e^{ux}}{\E\left(\e^{u\eta}\right)}= 
\sum_{i=0}^\infty \frac{u^i}{i!} x^i
\sum_{k=0}^\infty \frac{u^k}{k!} Q^{(\eta)}_k(0)
\]
and rearranging yields
\begin{equation}
\label{appell11}
 Q^{(\eta)}_m(x)=\sum_{k=0}^m{m\choose k}\,x^k\,Q^{(\eta)}_{m-k}(0).
\end{equation} 
Consequently, taking derivatives, it is seen that the Appell polynomials satisfy  
 \begin{eqnarray}
\label{appellx}
&&
\frac{d}{dx}Q^{(\eta)}_m(x)=m\,Q^{(\eta)}_{m-1}(x),
\\
&&
 Q^{(\eta)}_0(x)=1 \ {\rm for\ all\ } x.
\end{eqnarray}
The recursion in (\ref{appellx}) if combined with the normalization (cf. (\ref{appell1}))
$$ 
\E\left(Q_n(\eta)\right)=0 \ n=1,2,\dots
$$
provides an alternative definition of the Appell polynomials. 

Recall that the cumulant function associated with $\eta$ is defined as
 \begin{equation}
\label{appell3}
K(u):=\log\E\left(\e^{u\eta}\right)=\sum_{k=0}^\infty \kappa_k\,\frac{u^k}{k!},
 \end{equation}
and the coefficients $\kappa_k,\, k=0,1,\dots,$ in the above McLaurin expansion are called the cumulants of $\eta.$ We remark that 
$$ 
\kappa_1=\E(\eta),\qquad \kappa_2= \E\left(\left(\eta-\E(\eta)\right)^2\right)
,\qquad \kappa_3= \E\left(\left(\eta-\E(\eta)\right)^3\right)
$$
but
$$ 
\kappa_4\not= \E\left(\left(\eta-\E(\eta)\right)^4\right).
 $$
It is easily seen that the first Appell polynomials can be written as
\begin{equation}
\label{app_cum2}
Q^{(\eta)}_0(x)=1,\quad Q_1^{(\eta)}(x)=x-\kappa_1,\quad Q_2^{(\eta)}(x)=(x-\kappa_1)^2-\kappa_2. 
\end{equation} 
It is also possible to connect the cumulants and the origo moments via the Appell polynomials. 
To do this notice that
\begin{equation}
\label{cum_ori}
K'(u)=\frac{G'(u)}{G(u)},
\end{equation}
where
$$
G(u):= \E\left(\e^{u\eta}\right)=\sum_{k=0}^\infty \mu_k\,\frac{u^k}{k!}
$$
is the moment generating function of $\eta$ and $\mu_k$ are the origo moments. Equation (\ref{cum_ori}) takes the form
\[
\sum_{k=0}^\infty \kappa_{k+1}\,\frac{u^k}{k!}
=
\sum_{i=0}^\infty \mu_{i+1}\,\frac{u^i}{i!}
\sum_{k=0}^\infty Q^{(\eta)}_k(0)\,\frac{u^k}{k!}
 \]
leading to 
\begin{equation}
\label{cum_ori2}
\kappa_{m+1}=\sum_{k=0}^m{m\choose k}\,\mu_{k+1}\,Q^{(\eta)}_{m-k}(0).
\end{equation}

Notice also that if $\eta_1$ and $\eta_2$ are independent random variables with some exponential moments and if $\eta:=\eta_1 +\eta_2$ then
 \begin{equation}
\label{appell2}
Q^{(\eta)}_m(x+y)=\sum_{k=0}^m{m\choose k}Q^{(\eta_1)}_k(x)Q^{(\eta_2)}_{m-k}(y).
\end{equation}
This results after a straightforward computation from the identity
\[ 
\frac{\e^{u(x+y)}}{\E\left(\e^{u\eta}\right)}=\frac{\e^{ux)}}{\E\left(\e^{u\eta_1}\right)}\frac{\e^{uy}}{\E\left(\e^{u\eta_2}\right)}.
 \]
\begin{remark} It is perhaps amusing to notice that (\ref{appell2}) when combined with (\ref{appell1}) leads to Newton's binomial formula. Indeed,
 \begin{equation}
\label{appell21}
\nonumber
Q^{(\eta)}_m(\eta_1+\eta_2+x+y)=\sum_{k=0}^m{m\choose k}Q^{(\eta_1)}_k(\eta_1+x)
Q^{(\eta_2)}_{m-k}(\eta_2+y),
\end{equation} 
and taking expectations yield
 \begin{equation}
\label{appell22}
\nonumber
(x+y)^m=\sum_{k=0}^m{m\choose k}\,x^k\,y^{m-k}.
\end{equation}
\end{remark}

\begin{example}
Let $\eta$ be normally distributed with mean 0 and variance 1. Then
 \begin{eqnarray*}
&&
\frac{\e^{ux}}{\E\left(\e^{u\eta}\right)}=\exp\left(ux-\frac{1}{2}u^2\right)
\\ 
&&
\hskip1.4cm
=\sum_{k=0}^\infty \frac{(ux)^k}{k!}\sum_{l=0}^\infty \frac{(-u^2/2)^l}{l!}
\\ 
&&
\hskip1.4cm
=\sum_{k=0}^\infty\sum_{l=0}^\infty \frac{(-1)^l x^l}{k!\,l!\,2^l}u^{2l+k}
\\ 
&&
\hskip1.4cm
=\sum_{m=0}^\infty\frac{u^m}{m!}\ m!\sum_{l=0}^{\left[\frac{m}{2}\right]}
 \frac{(-1)^l x^{m-2l}}{(m-2l)!\,l!\,2^l}.
 \end{eqnarray*}
Consequently, 
$$
Q^{(\eta)}_n(x)=m!\sum_{l=0}^{\left[\frac{m}{2}\right]}
 \frac{(-1)^l x^{m-2l}}{(m-2l)!\,l!\,2^l},
 $$
and, hence, The Appell polynomials associated with a ${\rm N}(0,1)$-distributed random variable are the Hermite polynomials $He_n,\ n=0,1,\dots,$ (see Schoutens \cite{schoutens00} p. 52, and Abramowitz and Stegun \cite{abramowitzstegun70}  22.3.11 p. 775, 22.11 p. 785).
\end{example}
\begin{example}
\label{ex2}
We calculate the Appell polynomials of an exponentially distributed random variable. To pave the way to the applications below, consider a standard Brownian motion $B=\{B_t\,:\, t\geq 0\}$ starting from 0 and its running maximum  $M_t:=\sup_{s\leq t} B_s$. Let $T$ be an exponentially with parameter $r>0,$ distributed random variable independent of $B$. Then $M_T$ is exponentially distributed with mean $1/\sqrt{2r}.$ We find the Appell polynomials associated with $M_T$. Since
$$  
\E\left(\e^{uM_T}\right)=\frac{\sqrt{2r}}{\sqrt{2r}-u},\qquad u<\sqrt{2r}.
$$  
we have 
\begin{eqnarray*}
&&
\frac{\e^{ux}}{\E\left(\e^{uM_T}\right)}=\frac{\sqrt{2r}-u}{\sqrt{2r}}\,\e^{ux}
\\ 
&&
\hskip1.8cm
=\sum_{k=0}^\infty \frac{u^k}{k!} x^k- \sum_{k=0}^\infty \frac{u^{k+1}}{k!} \frac{x^k}{\sqrt{2r}}
\\ 
&&
\hskip1.8cm
=1+\sum_{k=1}^\infty \frac{u^k}{k!}\left(x-\frac{k}{\sqrt{2r}}\right)x^{k-1}.
 \end{eqnarray*}
Hence, the Appell polynomials are 
$$ 
Q_n^{(M_T)}(x)=\left(x-\frac{n}{\sqrt{2r}}\right)x^{n-1},\qquad n=0,1,\dots.  
 $$
Recall also that $B_T$ is Laplace-distributed with parameter $\sqrt{2r}$ and, hence, it holds
$$ 
\E\left(\e^{uB_T}\right)=\frac{2r}{{2r}-u^2},\qquad \vert u\vert <\sqrt{2r}.
 $$
Proceeding as above we find the Appell polynomial associated with $B_T:$
\begin{eqnarray*}
&&
\frac{\e^{ux}}{\E\left(\e^{uB_T}\right)}=\frac{{2r}-u^2}{{2r}}\,\e^{ux}
\\ 
&&
\hskip1.8cm
=\sum_{k=0}^\infty \frac{u^k}{k!}\left(x^2-\frac{k(k-1)}{{2r}}\right)x^{k-2}.
 \end{eqnarray*}
Let $B'$ be an independent copy of $B,$ and introduce $I'_t:=\inf_{s\leq t}B'_s.$ 
Since $\displaystyle{M_T\mathrel{\mathop=^{\rm (d)}}-I'_T } $ it holds
$$ 
Q_n^{(I_T)}(x)=\left(x+\frac{n}{\sqrt{2r}}\right)x^{n-1},\qquad n=0,1,\dots.  
 $$
It is well known (the Wiener-Hopf factorization)  that 
$$
X_T\mathrel{\mathop=^{\rm (d)}}M_T+I'_T,
  $$
and it is straightforward to check (cf. (\ref{appell2}))
 \begin{equation}
\nonumber
Q^{(X_T)}_m(x+y)=\sum_{k=0}^m{m\choose k}Q^{(M_T)}_k(x)Q^{(I_T)}_{m-k}(y).
\end{equation}
\end{example}

\subsection{L\'evy processes}
\label{sec12}
Let $X$ be a L\'evy process as introduced above and $T$ an exponential (parameter $r>0$) random variable  independent of $X.$ 
Our basic (rather restrictive) assumption on $X$ is that $X_T$ has some exponential moments, i.e., for some $\lambda >0$ 
\begin{equation}
\label{b1}
\E\left( \exp\left( \lambda\vert X_T\vert \right) \right) <\infty.
\end{equation}
 Define
 \begin{equation}
M_T=\sup_{0\leq  t<T}X_t\qquad\mbox{and}
\qquad  
I_T=\inf_{0\leq t<T}X_t. 
\label{max-min}
\end{equation}
By the Wiener-Hopf-Rogozin factorization
 \begin{equation}
 \label{wh}
X_T\mathrel{\mathop=^{\rm (d)}}M_T+I'_T,
 \end{equation}
where $I'_T$ is an independent copy of $I_T$ and ${\displaystyle{\mathop=^{\rm (d)}} }$  means that the variable on the left hand side is identical in law with the variable on the right hand side. From assumption (\ref{b1}) it follows using (\ref{wh}) that also $M_T$ and $I_T$ have some exponential moments. Moreover, under assumption (\ref{b1}), we have
\[ 
\E\left( \exp\left( \lambda X_T \right) \right)=\frac{1}{r-\psi(\lambda)}
 \]
 where 
 \begin{equation}
\psi(\gamma)=a\gamma+\frac 12b^2\gamma^2+
\int_{\R}\left( {\rm e}^{\gamma x}-1-\gamma x{\bf 1}_{\{|x|\leq 1\}} \right) \Pi(dx).
\label{eq:char-exp}
\end{equation}
is the  L\'evy-Khinchine exponent of $X$, i.e.,     
\[ 
\E\left( \exp\left( \lambda X_t \right) \right)=\exp\left( t\psi(\lambda)\right). 
 \] 
 Notation $\Pi$ stands for the L\'evy measure, i.e.,  a non negative measure defined on ${\R}\setminus\{0\}$
such that 
$\int (1\wedge x^2)\Pi(dx)<+\infty$. 
Clearly, it follows from (\ref{b1}) that for all $a>0$ and $\gamma>0$
\[ 
\int_a^\infty  {\rm e}^{\gamma x}\,\Pi(dx)+
\int_{-\infty}^{-a}  {\rm e}^{-\gamma x}\,\Pi(dx)<\infty,
 \] 
 but it is still possible to have 
 \[ 
\Pi\{(0,a) +\Pi\{(-a,0)=\infty 
 \] 
 or
 \[ 
\int_0^a x\,\Pi(dx)+\int_{-a}^0  \vert x\vert\,\Pi(dx) =\infty.
  \] 
  
In case $X$ is spectrally negative, i.e., $\Pi((0,+\infty))=0$, the process moves continuously to the right (or upwards) and $M_T$ is exponentially distributed under $\P_0$ with mean $1/\Phi(r)$ where $\Phi(r)$ is the unique positive root of the equation $\psi(\lambda)=r$ (see Bertoin \cite{bertoin96} p. 190 or Kyprianou \cite{kyprianou06} p. 213). From Example \ref{ex2} it is seen that the Appell polynomials associated with $M_T$ are 
\begin{equation}
\label{sn1}
Q_n^{(M_T)}(x)=\left(x-\frac{n}{\Phi(r)}
\right)x^{n-1},\qquad n=0,1,\dots.
\end{equation}

In case $X$ is spectrally positive, i.e., $\Pi((-\infty,0))=0$, the process moves continuously to the left (or downwards) and $-I_T$ is exponentially distributed  under $\P_0$ with mean $1/\hat\Phi(r)$ where $\hat\Phi(r)$ is the unique positive root of the equation $\psi(-\lambda)=r.$  From Example \ref{ex2} it is seen that the Appell polynomials associated with $I_T$ are 
\begin{equation}
\label{sp1}
Q_n^{(I_T)}(x)=\left(x+\frac{n}{\hat\Phi(r)}
\right)x^{n-1},\qquad n=0,1,\dots.
\end{equation}
  
\section{Optimal stopping problem}
\label{sec3}
\subsection{Review of three theorems}
\label{sec31}
We recall now the solution of the optimal stopping problem (\ref{osp}) from \cite{novikovshiryaev04} and \cite{kyprianousurya05}. 
We assume that $r>0,$ and recall that $T$ is an exponentially (with parameter $r$) distributed random variable independent of $X$.  
\begin{thm} 
\label{ns}
Assume
$$
\int_{(1,+\infty)} x^n\,\Pi(dx)<\infty. 
$$
Then $\E(M^n_T)<\infty$ and 
\begin{equation}
\nonumber
V(x):=\sup_{\tau\in {\cM}}\E_x\left({\rm e}^{-r\tau}\left(X^+_\tau\right)^n\right)
=\E_0\left(Q^{(M)}_n(M_T+x){\bf 1}_{\{M_T+x>x^*_n\}}\right)
\end{equation}
and
\begin{equation}
\nonumber
\tau^*_n=\inf\{t\geq 0\, :\, X_t>x^*_n\}
\end{equation}
is an optimal stopping time, where $Q^{(M)}$ is the Appell polynomial associated with $M_T$ and  $x^*_n$ is its largest non-negative root. 
\end{thm}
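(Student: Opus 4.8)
The plan is to argue by a guess-and-verify strategy of the type used in \cite{novikovshiryaev04,kyprianousurya05}. Write $g(x)=(x^+)^n$, put $Q:=Q^{(M_T)}_n$, let $x^*_n\ge 0$ be its largest non-negative root, set $\tau^*:=\inf\{t\ge 0:X_t>x^*_n\}$, and introduce the candidate
\[
\hat V(x):=\E_0\bigl(Q(M_T+x)\,{\bf 1}_{\{M_T+x>x^*_n\}}\bigr).
\]
I will check three things: (a) $\hat V(x)=\E_x\bigl(\e^{-r\tau^*}g(X_{\tau^*})\bigr)$ for every $x$; (b) $\hat V\ge g$ everywhere; (c) $\hat V$ is $r$-excessive for $X$. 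Granting (a)--(c), $\e^{-rt}\hat V(X_t)$ is a non-negative supermartingale (non-negative by (b)), so for any $\tau\in\cM$, (b) and optional stopping give $\E_x(\e^{-r\tau}g(X_\tau))\le\E_x(\e^{-r\tau}\hat V(X_\tau))\le\hat V(x)$, hence $V\le\hat V$; combined with $\hat V(x)=\E_x(\e^{-r\tau^*}g(X_{\tau^*}))\le V(x)$ from (a) this yields $V=\hat V$ (in particular $V$ is finite) and the optimality of $\tau^*$. The assertion $\E(M^n_T)<\infty$ is the moment fact that makes every expectation above finite: $\int_{(1,\infty)}x^n\,\Pi(dx)<\infty$ is equivalent to $\E((X_1^+)^n)<\infty$, and by the fluctuation theory of L\'evy processes this propagates to finiteness of the $n$-th moment of the running supremum (see \cite{kyprianousurya05,kyprianou06}), whence $\E(M^n_T)<\infty$ and $\E(|Q(M_T+x)|)<\infty$.

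The heart of the matter, and the step I expect to be the main obstacle, is (a); it rests on a first-passage identity combined with the mean value property (\ref{appell1}). Fix $x<x^*_n$, put $b:=x^*_n-x>0$ and $\tau_b:=\inf\{t:X_t>b\}$; by spatial homogeneity $\E_x(\e^{-r\tau^*}g(X_{\tau^*}))=\E_0(\e^{-r\tau_b}(X_{\tau_b}+x)^n)$, the positive part being redundant because $X_{\tau_b}\ge x^*_n\ge 0$ wherever $\e^{-r\tau_b}\ne 0$. By (\ref{appell1}), $(X_{\tau_b}+x)^n=\E_0\bigl(Q(X_{\tau_b}+x+M'_T)\bigr)$ with $M'_T$ an independent copy of $M_T$. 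Now comes the regeneration of the maximum: by the strong Markov property at $\tau_b$ together with the lack of memory of the independent exponential time $T$, on $\{\tau_b<T\}$ (which up to a null set equals $\{M_T>b\}$) the running maximum of $X$ before $\tau_b$ is $\le b\le X_{\tau_b}$, so $M_T=X_{\tau_b}+M''$ with $M''$ a fresh copy of $M_T$ independent of $\cF_{\tau_b}$; hence for any $F$ with $\E(|F(M_T)|)<\infty$,
\[
\E_0\bigl(\e^{-r\tau_b}\,\E_0(F(X_{\tau_b}+M'_T))\bigr)=\E_0\bigl(F(M_T);\,M_T>b\bigr).
\]
Applying this with $F(y)=Q(y+x)$ — so that its left-hand side is exactly $\E_0(\e^{-r\tau_b}(X_{\tau_b}+x)^n)$ — gives $\E_x(\e^{-r\tau^*}g(X_{\tau^*}))=\E_0(Q(M_T+x);\,M_T>x^*_n-x)=\hat V(x)$. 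For $x\ge x^*_n$ both sides reduce to $g(x)=x^n$: the candidate by (\ref{appell1}) and $Q(x^*_n)=0$ (which makes the indicator immaterial), the value because $\tau^*=0$ on the stopping region. This proves (a).

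For (b) and (c) I will use the elementary structural properties of the Appell polynomials of the non-negative variable $M_T$ (see \cite{novikovshiryaev04} and Section~\ref{sec1}): $x^*_n$ is well defined, $Q\le 0$ on $[0,x^*_n]$, and $Q\ge 0$ and non-decreasing on $[x^*_n,\infty)$. For (b): if $0\le x<x^*_n$ then on $\{M_T+x\le x^*_n\}$ we have $M_T+x\in[x,x^*_n]\subset[0,x^*_n]$, so $Q(M_T+x)\le 0$, whence $\hat V(x)=\E_0(Q(M_T+x))-\E_0(Q(M_T+x);\,M_T+x\le x^*_n)\ge x^n=g(x)$; for $x<0$, $\hat V(x)\ge 0=g(x)$ because $Q\ge 0$ on $\{M_T+x>x^*_n\}$; and $\hat V=g$ for $x\ge x^*_n$ was noted above. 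For (c): write $\hat V(x)=\E_x(\phi(M_T))$ with $\phi(y):=Q(y){\bf 1}_{\{y>x^*_n\}}$, which by the cited properties is non-negative, non-decreasing and continuous (it vanishes at $x^*_n$). That $x\mapsto\E_x(\phi(M_T))$ is then $r$-excessive is precisely the ``function-of-the-maximum'' mechanism exploited in \cite{mordeckisalminen07}, and follows again from a regeneration of the maximum: by the Markov property at a deterministic time $t$ and the memorylessness of $T$, on $\{T>t\}$ one has $M_T=\max(\sup_{s\le t}X_s,\,X_t+M')$ with $M'$ a fresh copy of $M_T$, so using $\phi\ge 0$ and $\phi$ non-decreasing, $\e^{-rt}\E_x(\hat V(X_t))=\E_x(\phi(X_t+M');\,T>t)\le\E_x(\phi(M_T))=\hat V(x)$; and $\e^{-rt}\E_x(\hat V(X_t))\to\hat V(x)$ as $t\downarrow 0$ by right-continuity of the paths and dominated convergence (using $\E(M^n_t)<\infty$ and that $\hat V$ is of polynomial growth). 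Hence $\hat V$ is $r$-excessive and the verification is complete. In short: granted the structural lemma on the Appell polynomials and the moment fact $\E(M^n_T)<\infty$, the only genuinely delicate point is the identity in (a) linking $(X_{\tau_b}+x)^n$, through the mean value property, to the regeneration of the running maximum at the first-passage time.
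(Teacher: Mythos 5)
Your verification argument is correct and is essentially the proof the paper points to: Theorem \ref{ns} is recalled from \cite{novikovshiryaev04} and \cite{kyprianousurya05} without proof, and the Remark following it singles out exactly the two ingredients you build on, namely the fluctuation identity (\ref{fluc}) (your step (a), obtained from the mean value property (\ref{appell1}) plus regeneration of the maximum at the first-passage time) and the negativity property (\ref{app_neg}) of $Q^{(M)}_n$ on $(0,x^*_n)$ (your step (b)). The remaining excessivity and supermartingale steps are the standard Novikov--Shiryaev/Kyprianou--Surya verification, so no comparison beyond this is needed.
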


\begin{remark} 
 The proof of Theorem \ref{ns} uses the fact that 
 \begin{equation}
 \label{app_neg}
Q^{(M)}_n(x)< 0\quad{\rm for\ all}\quad x\in(0,x^*_n)
\end{equation} 
(see \cite{novikovshiryaev04}).  
This property is based on the fluctuation identity
\begin{equation}
\label{fluc}
\E_x\left({\rm e}^{-rH_a}\,X^n_{H_a}\,{\bf 1}_{\{H_a<\infty\}}\right)
=\E_0\left(Q^{(M)}_n(M_T+x){\bf 1}_{\{M_T+x>a\}}\right),
\end{equation} 
where $a>0$ and $H_a:=\inf\{t\,:\,X_t\geq a\}.$ Notice that (\ref{app_neg}) is not valid in general for Appell polynomials of a non-negative random variable. Indeed, from (\ref{app_cum2})  
we have, e.g., 
\[ 
Q^{(\eta)}_2(0)>0\quad {\rm if}\quad 2(\E(\eta))^2>\E(\eta^2).   
 \]
\end{remark} 
\begin{example}
\label{ex_sn}
Using the result in Theorem \ref{ns} it is easy to find the explicit solution of the problem for spectrally negative L\'evy processes. Indeed, recall from previous section that $M_T$ is exponentially distributed with parameter $\Phi(r),$  the Appell polynomials associated with $M_T$ are given in  (\ref{sn1}), and $x^*_n:=n/\Phi(r).$ With this data 
we obtain  
 \begin{eqnarray*}
&&
\sup_{\tau}\E_x\left(\e^{-r \tau}\left(X^+_\tau\right)^n \right)
=\E_x\left( Q^{(M_T)}_n\left(M_T \right)\,;\, M_T\geq x^*_n \right)
\\
&&
\hskip2cm
=\E_x\left(\left(M_T-\frac{n}{\Phi(r)} \right)M_T^{n-1}\,;\, M_T\geq \frac{n}{\Phi(r)}   \right) 
\\
&&
\hskip2cm
=\int_{n/\Phi(r)}^\infty\left(y-\frac{n}{\Phi(r)} \right)\,y^{n-1}\,\,\Phi(r) \e^{-\Phi(r)(y-x)}\, dy
\\
&&
\hskip2cm
=  \e^{x\Phi(r)}\left (\int_{n/\Phi(r)}^\infty y^n\,\Phi(r)\, 
\e^{-y\Phi(r)}\, dy - 
\int_{n/\Phi(r)}^\infty n\,y^{n-1}\, \e^{-y\Phi(r)}\, dy
\right )
\\
&&
\hskip2cm
= \left(\frac{n}{\Phi(r)} \right)^n\, \e^{-n}\, \e^{x\Phi(r)}.
\end{eqnarray*} 
Therefore, the value function of the optimal stopping problem is given by
\begin{equation}
\nonumber
V(x)=\begin{cases}
x^n,
 &\text{if $x\geq n/\Phi(r)$},\\
{}&\text{}\\
 \left(\frac{n}{\Phi(r)} \right)^n\, \e^{-n}\, \e^{x\Phi(r)},
&\text{if $x<n/\Phi(r)$}.\\
\end{cases}
\end{equation}
Clearly, the solution has the  smooth-fit property.
\end{example}

Next we recall the result from \cite{mordeckisalminen07} which characterizes the value function via its representing measure. The theorem is proved in \cite{mordeckisalminen07} for Hunt processes satisfying a set of assumptions. In particular, it is assumed that there exists a duality measure $m$ such that the resolvent kernel has a regular density with respect to $m$  (for the assumptions and a discussion of their validity for L\'evy processes, see \cite{mordeckisalminen07}). This density is denoted by $G_r,$ and it holds 
\[ 
\P_x(X_T\in dy)=r\,G_r(x,y)m(dy)=r\,G_r(0,y-x)m(dy)
 \]

\begin{thm}
\label{prop1}
Consider a L\'evy process  $\{X_t\}$ satisfying the assumptions made in
\cite{mordeckisalminen07}, a non-negative continuous reward function $g$,
and a discount rate $r> 0$ such that 
\begin{equation}
\label{r1}
\E_x(\sup_{t\geq 0}e^{-rt}g(X_t))<\infty.
\end{equation}
Assume that there exists a Radon 
measure $\sigma$ 
with support on the set $[x^*,\infty)$ 
such that the
function
\begin{equation}
\label{excessive}
V(x):=\int_{[x^*,\infty)}G_r(x,y)\sigma(dy)
\end{equation}
satisfies the following conditions:
\begin{itemize}
\item[\rm (a)]
$V$ is  continuous,
\item[\rm (b)]
$V(x)\to 0$ when $x\to-\infty$.
\item[\rm (c)]
$V(x)=g(x)\ \text{when $x\geq x^*$}$,
\item[\rm (d)]
$V(x)\geq g(x)\ \text{when $x<x^*$}$.
\end{itemize}
Let 
\begin{equation}\label{tau-star}
\tau^*=\inf\{t\geq 0\colon X_t\geq x^*\}.
\end{equation}
Then $\tau^*$ is an optimal stopping time and $V$ is the value
function of the optimal stopping problem for $\{X_t\}$ with the reward function $g,$
in other words,
\begin{equation*}
V(x)= \sup_{\tau\in{\cal M}}\E_x\left({\rm e}^{-r\tau} g(X_{\tau})\right)=\E_x\left( {\rm e}^{-r\tau^*}g(X_{\tau^*})\right), 
\quad x\in\R.
\end{equation*}
In case $\tau=+\infty$ 
$$
{\rm e}^{-r\tau}g(X_{\tau}):=
\limsup_{t\to\infty}{\rm e}^{-r t}g(X_t).
$$

\end{thm}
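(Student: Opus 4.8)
The plan is to run the classical verification argument for optimal stopping, exploiting the potential-theoretic structure of $V$. There are two halves: that $V$ dominates the value of an arbitrary stopping rule, because $V$ is $r$-excessive and $V\ge g$; and that the proposed time $\tau^*$ attains $V$, because $V$ is ``$r$-harmonic'' strictly below $x^*$, which is forced by ${\rm supp}\,\sigma\subseteq[x^*,\infty)$.

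First I would note that, as the $r$-potential $G_r\sigma$ of a Radon measure, $V$ is $r$-excessive within the regularity framework borrowed from \cite{mordeckisalminen07}; hence $Y_t:={\rm e}^{-rt}V(X_t)$ is a non-negative right-continuous $\P_x$-supermartingale. Being non-negative, $Y$ converges $\P_x$-a.s. to a limit $Y_\infty$, and optional sampling gives $\E_x(Y_\tau)\le\E_x(Y_0)=V(x)$ for every $\tau\in\cM$, with $Y_\tau:=Y_\infty$ on $\{\tau=\infty\}$. Since $0\le g\le V$ everywhere by (c) and (d), on $\{\tau<\infty\}$ one has ${\rm e}^{-r\tau}g(X_\tau)\le Y_\tau$, and on $\{\tau=\infty\}$ one has ${\rm e}^{-r\tau}g(X_\tau)=\limsup_t{\rm e}^{-rt}g(X_t)\le\limsup_t Y_t=Y_\infty$; in either case ${\rm e}^{-r\tau}g(X_\tau)\le Y_\tau$, so $\E_x({\rm e}^{-r\tau}g(X_\tau))\le V(x)$ and therefore $V(x)\ge\sup_{\tau\in\cM}\E_x({\rm e}^{-r\tau}g(X_\tau))$.

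Next I would prove the reverse inequality at $\tau^*$. For $x\ge x^*$ it is trivial: $\tau^*=0$ and $V(x)=g(x)$ by (c). For $x<x^*$ the decisive observation is that $[x^*,\infty)$ is a half-line, so the process cannot reach any $y\ge x^*$ before entering $[x^*,\infty)$; hence the $r$-occupation density of $X$ started at $x$ at a point $y\ge x^*$ is accrued entirely after time $\tau^*$, and the strong Markov property gives
\begin{equation}
\nonumber
G_r(x,y)=\E_x\left({\rm e}^{-r\tau^*}\,G_r(X_{\tau^*},y)\,{\bf 1}_{\{\tau^*<\infty\}}\right),\qquad x<x^*\le y.
\end{equation}
Integrating against $\sigma(dy)$ and applying Tonelli together with ${\rm supp}\,\sigma\subseteq[x^*,\infty)$ yields $V(x)=\E_x\left({\rm e}^{-r\tau^*}V(X_{\tau^*}){\bf 1}_{\{\tau^*<\infty\}}\right)$; on $\{\tau^*<\infty\}$ we have $X_{\tau^*}\in[x^*,\infty)$, so (c) converts this into $V(x)=\E_x\left({\rm e}^{-r\tau^*}g(X_{\tau^*}){\bf 1}_{\{\tau^*<\infty\}}\right)$. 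Finally, on $\{\tau^*=\infty\}$ one has $X_t<x^*$ for all $t$; since $V$ is continuous and tends to $0$ at $-\infty$ (condition (b)) it is bounded on $(-\infty,x^*]$, so ${\rm e}^{-rt}g(X_t)\le{\rm e}^{-rt}V(X_t)\to 0$ because $r>0$, and the $\limsup$ convention contributes nothing. Thus $V(x)=\E_x({\rm e}^{-r\tau^*}g(X_{\tau^*}))$, and with the first part this shows $\tau^*$ optimal and $V$ the value function.

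The hard part will be the ``$r$-harmonicity below $x^*$'' identity displayed above: making rigorous, inside the assumed framework of \cite{mordeckisalminen07} (existence and regularity of $G_r$ relative to the duality measure $m$, quasi-left-continuity, absence of oscillation of $X$ at $\tau^*$), the claim that ${\rm e}^{-r(t\wedge\tau^*)}V(X_{t\wedge\tau^*})$ is a genuine martingale rather than merely a supermartingale. This is precisely where the half-line geometry of the stopping region and the location of ${\rm supp}\,\sigma$ enter, and where one must control null sets, the value $X_{\tau^*}$, and the event $\{\tau^*=\infty\}$; conditions (b) and (\ref{r1}) are what keep the behaviour at infinity in check.
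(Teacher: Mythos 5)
The paper does not reproduce a proof of this theorem; it is quoted from \cite{mordeckisalminen07}, where it is established by exactly the verification argument you give: $r$-excessivity of the potential $G_r\sigma$ together with $V\geq g$ yields the supermartingale upper bound, and the strong Markov identity $G_r(x,y)=\E_x\left({\rm e}^{-r\tau^*}G_r(X_{\tau^*},y)\,{\bf 1}_{\{\tau^*<\infty\}}\right)$ for $x<x^*\leq y$, integrated against $\sigma$, gives harmonicity below $x^*$ and attainment at $\tau^*$. Your proposal is correct and follows essentially the same route as the cited source.
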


Next result, also from \cite{mordeckisalminen07}, gives a more explicit form for value function $V$ via  the maximum variable $M_T.$ The result is derived by exploiting the Wiener-Hopf-Rogozin factorization. For simplicity, we assume that $I_T$ has a density. 

\begin{thm}
\label{corollary:onesided}
Assume that the conditions of Theorem  \ref{prop1} hold.
Then, there exists a function $H\colon [x^*,\infty)\to\R$ such
that the value function $V$ in  \eqref{excessive} 
satisfies
\begin{equation*}
V(x)=\E_x\left(H(M_T)\,;\,M_T\geq x^*\right),
\qquad
x\leq x^*.
\end{equation*}
Moreover, the function $H$ has the explicit representation 
\begin{equation}\label{q}
H(z)=r^{-1}\int_{x^*}^{z}f_I(y-z)\sigma(dy),\quad  z\geq x^*.
\end{equation}
where $f_I$ is the density of the distribution of $I_T.$  
\end{thm}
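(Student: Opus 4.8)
The plan is to prove both assertions at once, by inserting the Wiener--Hopf--Rogozin factorization \eqref{wh} into the resolvent representation \eqref{excessive} and then interchanging the order of integration. Recall that for a L\'evy process the duality measure $m$ in Theorem \ref{prop1} may be taken to be Lebesgue measure, so that $r\,G_r(x,y)=q_r(y-x)$, where $q_r$ denotes the Lebesgue density of $X_T$ under $\P_0$; this translation invariance is already implicit in the identity $G_r(x,y)=G_r(0,y-x)$ quoted in the excerpt.

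First I would record \eqref{wh} at the level of densities. Since $X_T\mathrel{\mathop=^{\rm (d)}}M_T+I'_T$ with $M_T\ge 0$ and $I'_T\le 0$ independent, and since $I'_T$ has density $f_I$ supported on $(-\infty,0]$,
\[
q_r(z)=\int_{[z^+,\infty)}f_I(z-m)\,\P_0(M_T\in dm),\qquad z\in\R,
\]
and hence, for $x\le x^*\le y$,
\[
G_r(x,y)=\frac1r\int_{[y-x,\infty)}f_I(y-x-m)\,\P_0(M_T\in dm).
\]
This is the only step in which the factorization is used; everything afterwards is bookkeeping.

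Next I would substitute this into \eqref{excessive}, obtaining for $x\le x^*$ a double integral in $\sigma(dy)$ and $\P_0(M_T\in dm)$. Since $f_I\ge 0$ and $\sigma$ is a non-negative Radon measure, Tonelli's theorem applies, and --- the total mass being finite by \eqref{r1} --- the two integrations may be swapped, integrating in $y$ first. For $x\le x^*$ the constraints $y\ge x^*$ and $y-x\le m$ (the latter forced by the support of $f_I$) give $m\ge x^*-x\ge 0$ and $x^*\le y\le x+m$, so that
\[
V(x)=\frac1r\int_{[x^*-x,\infty)}\Bigl(\int_{[x^*,\,x+m]}f_I(y-x-m)\,\sigma(dy)\Bigr)\P_0(M_T\in dm).
\]
Finally, under $\P_x$ the variable $M_T=\sup_{0\le t<T}X_t$ has the law of $x+M_T$ under $\P_0$, and $M_T-x\ge0$; hence the right-hand side equals $\E_x\!\left(H(M_T)\,;\,M_T\ge x^*\right)$ after the substitution $z=x+m$, with
\[
H(z):=\frac1r\int_{[x^*,\,z]}f_I(y-z)\,\sigma(dy),\qquad z\ge x^*,
\]
which is exactly \eqref{q}.

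The step I expect to be the main obstacle is the rigorous passage of the Wiener--Hopf factorization to densities and its compatibility with the reference measure: one must check that $G_r(0,\cdot)$ really is a version of $r^{-1}$ times the convolution of $f_I$ with the law of $M_T$, keep careful track of the supports $M_T\ge0$ and $I_T\le0$, and use the hypothesis $x\le x^*$ to keep the lower endpoint $x^*-x$ inside $[0,\infty)$ so that no mass of $M_T$ is lost. Granting this, the Tonelli interchange is routine and the identification of $H$ is just the change of variable $z=x+m$.
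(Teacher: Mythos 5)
Your proposal is correct: the paper states this theorem without proof, importing it from Mordecki--Salminen \cite{mordeckisalminen07} and noting only that ``the result is derived by exploiting the Wiener-Hopf-Rogozin factorization,'' which is exactly the route you take (factorize $r\,G_r(x,y)=q_r(y-x)$ as the convolution of the law of $M_T$ with the density $f_I$ of $I_T$, substitute into \eqref{excessive}, and apply Tonelli). The support bookkeeping forced by $M_T\geq 0$, $I_T\leq 0$ and $x\leq x^*$ is handled correctly, and the change of variable $z=x+m$ yields precisely \eqref{q}.
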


\subsection{Representing measure}
\label{sec32}

Let now $V$ denote the value function for problem (\ref{osp}) as given in Theorem \ref{ns}:
\begin{equation}
\label{V}
V(x)=\E_0\left(Q^{(M)}_n(M_T+x){\bf 1}_{\{M_T+x>x^*_n\}}\right).
\end{equation}  
This function has the properties (a)-(c) of Theorem \ref{prop1}; (a) and (b) follow from monotone convergence, (c) and (d) from the properties of the Appell polynomials (or from the fact that $V$ is indeed the value function of the problem). Therefore, it is natural to ask whether it is possible to find a measure $\sigma_n,$ say, such that (\ref{q}) holds, i.e.,
\begin{equation}\label{qq}
Q^{(M)}_n(z)=r^{-1}\int_{x_n^*}^{z}f_I(y-z)\sigma_n(dy),\quad  z\geq x_n^*.
\end{equation} 
From this equation it is possible to derive an expression for the Laplace transform of $\sigma.$
\begin{proposition}
\label{re}
There exists a measure $\sigma_n$ such that (\ref{qq}) holds. The Laplace transform $\widehat\sigma_n$ of 
this measure is given by 
\begin{eqnarray}
&&
\nonumber
\widehat\sigma_n(\gamma):=\int_{x_n^*}^{\infty} \, \e^{-\gamma y}\, \sigma_n(dy)
\\
\label{req3}
&&\hskip1.2cm
=
r\,\widehat{q}_n(\gamma, x_n^*)/\E\left (\e^{\gamma I_T}\right )
\\
\label{req31}
&&\hskip1.2cm
=\sum_{k=0}^\infty \frac{\gamma^k}{k!}\,\widehat{q}_n(\gamma,x_n^*)\, Q^{(I)}_k(0),  
\end{eqnarray}
where
\begin{eqnarray}
\label{q1}
&&
\nonumber
\widehat{q}_n(\gamma,x_n^*):=\int_{x_n^*}^{\infty} \e^{-\gamma z}Q_n^{(M)}(z)\, dz
\\
&&\hskip1.7cm
=\sum_{i=0}^n\frac{n!}{(n-i)!}\frac{{\e^{-\gamma x_n^*}}}{\gamma^{i+1}}
\, Q^{(M)}_{n-i}(x_n^*)
\end{eqnarray}
with $Q^{(M)}_{n-i}(x_n^*)\geq 0,\ i=0,1,\dots,n.$ 
\end{proposition}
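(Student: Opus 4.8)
The plan is to produce the measure $\sigma_n$ essentially by reverse-engineering equation (\ref{qq}): the right-hand side of (\ref{qq}) is a convolution (in the variable $z$, against the density $f_I$ reflected), so taking Laplace transforms should turn it into a product and let us solve for $\widehat\sigma_n$. Concretely, I would first extend $Q_n^{(M)}$ by $0$ on $(-\infty,x_n^*)$ and likewise treat $\sigma_n$ as supported on $[x_n^*,\infty)$; then the integral $\int_{x_n^*}^z f_I(y-z)\sigma_n(dy)$ is, up to the sign convention, the value at $z$ of the convolution of $\sigma_n$ with the law of $I_T$ (recall $I_T\le 0$, so $y-z\le 0$ is where $f_I$ lives). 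Applying $\int_{x_n^*}^\infty \e^{-\gamma z}(\cdot)\,dz$ to both sides of (\ref{qq}) and using Fubini gives
\[
\widehat q_n(\gamma,x_n^*)=r^{-1}\,\widehat\sigma_n(\gamma)\,\E\!\left(\e^{\gamma I_T}\right),
\]
which is exactly (\ref{req3}) after rearranging; existence of $\sigma_n$ then amounts to checking that the function of $\gamma$ defined by the right-hand side of (\ref{req3}) is genuinely the Laplace transform of a (signed? no — positive) Radon measure on $[x_n^*,\infty)$. The identity (\ref{req31}) then follows immediately by expanding $1/\E(\e^{\gamma I_T})=\sum_k \gamma^k Q_k^{(I)}(0)/k!$ via the defining relation (\ref{appell}) for the Appell polynomials of $I_T$ evaluated at $x=0$.

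Next I would establish the closed form (\ref{q1}) for $\widehat q_n(\gamma,x_n^*)=\int_{x_n^*}^\infty \e^{-\gamma z}Q_n^{(M)}(z)\,dz$. This is a routine integration-by-parts computation: differentiating $Q_n^{(M)}$ lowers the degree by one with a factor $n$ by (\ref{appellx}), namely $\frac{d}{dz}Q_{n-i}^{(M)}(z) = (n-i)Q_{n-i-1}^{(M)}(z)$, so repeated integration by parts on $[x_n^*,\infty)$ produces the finite sum $\sum_{i=0}^n \frac{n!}{(n-i)!}\frac{\e^{-\gamma x_n^*}}{\gamma^{i+1}}Q_{n-i}^{(M)}(x_n^*)$, with the boundary terms at $+\infty$ vanishing for $\gamma>0$ since $Q_n^{(M)}$ is a polynomial. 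The remaining arithmetic claim, that $Q_{n-i}^{(M)}(x_n^*)\ge 0$ for $i=0,1,\dots,n$, I would argue from the structure of the roots of the Appell polynomials of $M_T$: by (\ref{appellx}), $Q_{n-i}^{(M)}$ is proportional to the $i$-th derivative of $Q_n^{(M)}$, and $x_n^*$ is the largest (non-negative) root of $Q_n^{(M)}$. Using (\ref{app_neg}) — $Q_n^{(M)}(x)<0$ on $(0,x_n^*)$ — together with the interlacing of roots of a polynomial and its derivatives (or directly: $Q_n^{(M)}$ is increasing through its largest root, hence $Q_n^{(M)\,\prime}(x_n^*)\ge 0$, and inductively each further derivative is non-negative at the largest root since that root lies to the right of all roots of the derivative), one gets $Q_{n-i}^{(M)}(x_n^*)\ge 0$ for every $i$.

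Finally I would assemble the pieces: positivity of each coefficient $Q_{n-i}^{(M)}(x_n^*)\ge 0$ shows $\widehat q_n(\gamma,x_n^*)$ is a non-negative combination of the completely monotone functions $\gamma\mapsto \e^{-\gamma x_n^*}/\gamma^{i+1}$, each of which is the Laplace transform of a positive measure on $[x_n^*,\infty)$ (the $i$-fold integral of $\delta_{x_n^*}$); multiplying by the Laplace transform $\sum_k \gamma^k Q_k^{(I)}(0)/k!$ of the (finite, positive total mass) measure coming from $1/\E(\e^{\gamma I_T})$ — more carefully, using that $I_T\le 0$ has a density $f_I$ and that $\sigma_n$ is obtained as $r\,\widehat q_n(\cdot,x_n^*)/\E(\e^{\,\cdot\, I_T})$, which I would recognize as the Laplace transform of a convolution-type expression — yields a bona fide Laplace transform of a Radon measure $\sigma_n$ on $[x_n^*,\infty)$, and uniqueness of Laplace transforms forces this $\sigma_n$ to satisfy (\ref{qq}). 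The step I expect to be the main obstacle is precisely this last one: showing that division by $\E(\e^{\gamma I_T})$ preserves ``being the Laplace transform of a positive Radon measure'' — the polynomials $Q_k^{(I)}(0)$ need not all be non-negative, so one cannot conclude positivity termwise, and the cleanest route is probably to go back to (\ref{qq}) directly, define $\sigma_n$ by deconvolution (inverting the convolution operator $g\mapsto \int f_I(y-\cdot)g(dy)$ against the law of $I_T$, which is invertible because $M_T$ and $I_T$ Wiener–Hopf-factorize $X_T$), and verify a posteriori that the resulting object is a positive measure using Theorem~\ref{corollary:onesided} applied to the known value function $V$ of (\ref{V}).
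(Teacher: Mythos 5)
Your proposal follows essentially the same route as the paper: Laplace-transform both sides of (\ref{qq}), apply Fubini to obtain (\ref{req3}), expand $1/\E(\e^{\gamma I_T})$ via the Appell definition (\ref{appell}) at $x=0$ to get (\ref{req31}), integrate by parts repeatedly with (\ref{appellx}) for (\ref{q1}), and deduce $Q^{(M)}_{n-i}(x_n^*)\geq 0$ from the ordering of the largest roots $x^*_1\leq x^*_2\leq\dots$ of the $Q^{(M)}_k$. Your closing worry about existence and positivity of $\sigma_n$ is legitimate, but the paper itself treats this lightly (existence is effectively inherited from Theorem \ref{corollary:onesided}), so no substantive divergence remains.
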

\begin{proof} Multiply equation (\ref{qq}) with $\e^{-\gamma z}$ and  integrate over $(x^*,\infty)$  to obtain (to simply the notation we omit the subindex $n$) 
\begin{eqnarray*}
\label{eq1}
&&
\nonumber
\hskip-.6cm
\int_{x^*}^{\infty} \e^{-\gamma z}Q^{(M)}(z)\, dz
=r^{-1}\int_{x^*}^{\infty} dz  \e^{-\gamma z}\int_{x^*}^{z}\sigma(dy)\,f_I(y-z)
\\
&&
\nonumber
\hskip3cm
=r^{-1}\int_{x^*}^{\infty}  \sigma(dy)\, 
\int_{y}^{\infty} dz\, \e^{-\gamma z}\,f_I(y-z)
\\
&&
\nonumber
\hskip3cm
=r^{-1}\int_{x^*}^{\infty} \sigma(dy)\, \e^{-\gamma y} 
\int_{y}^{\infty} dz\, \e^{-\gamma (z-y)}\,f_I(y-z)
\\
&&
\hskip3cm
=r^{-1}\int_{x^*}^{\infty} \sigma(dy)\, \e^{-\gamma y} 
\int_{0}^{\infty} dz\, \e^{-\gamma u}\,f_I(-u).
\end{eqnarray*}
In other words, $\widehat{\sigma}$ is well defined (by the assumption that $I_T$ has some exponential moments) and satisfies
\[ 
\widehat\sigma(\gamma)=r\,\widehat{q}(\gamma,x^*)/\E\left (\e^{\gamma I_T}\right ),
 \]
which is (\ref{req3}). To obtain (\ref{req31}) use the definition of Appell polynomials (see (\ref{appell}), and identity (\ref{q1}) is a straightforward integration using (\ref{appellx}). Finally, notice that the positive zeros $x^*_n$ of $Q^{(M)}_n$, respectively, satisfy
\[  
x^*_1\leq x^*_2\leq\dots 
\]
This follows readily again from (\ref{appellx}) and the fact that $x^*_k$ is the unique positive zero of  $Q^{(M)}_k.$ Hence, $Q^{(M)}_k(x^*_n)\geq 0$ for $k=0,1,\dots, n.$
\end{proof}

In case $X$ is spectrally negative the value function can be determined explicitly  as demonstrated in Example \ref{ex_sn}. However, the Laplace transform of the representing measure does not seem to have an explicit inversion. To discuss this case more in detail, recall that  $M_T$ is exponentially distributed with parameter $\Phi(r).$ Hence, we can relax our assumptions on exponential moments. It holds for $\gamma\geq 0$
\[ 
\E\left( \e^{\gamma X_T}\right) =\frac{r}{r-\psi(\gamma)},
 \]
and, by the Wiener-Hopf factorization,
\begin{equation}
\label{b11}
\E\left( \e^{\gamma I_T}\right) =\E\left( \e^{\gamma X_T}\right)/\E\left( \e^{\gamma M_T}\right)=\frac{r\left(\Phi(r)-\gamma \right) }{\Phi(r)\left( r-\psi(\gamma)\right) }.
\end{equation}
Notice also that $Q^{(M)}_k(x^*_n)=(n-k)n^k/(\Phi(r))^k.$ 
To conclude this discussion, we formulate the following
\begin{corollary} 
\label{sigma_sp}
If $\{X_t\}$ is spectrally negative then 
\begin{eqnarray}
&&
\nonumber
\hskip-1.3cm
\widehat\sigma_n(dx)=r\,\widehat{q}_n(\gamma, x_n^*)/\E\left (\e^{\gamma I_T}\right )
=\widehat{q}_n(\gamma, x_n^*)
\frac{\Phi(r)\left( r-\psi(\gamma)\right) }{\Phi(r)-\gamma }
\end{eqnarray} 
and 
\begin{eqnarray}
\label{q11}
&&
\nonumber
\widehat{q}_n(\gamma,x_n^*):=\int_{x_n^*}^{\infty} \e^{-\gamma z}Q_n^{(M)}(z)\, dz
\\
&&\hskip1.7cm
=\sum_{i=0}^n\frac{n!}{(n-i)!}\frac{{\e^{-\gamma x_n^*}}}{\gamma^{i+1}}
\, \frac{i}{\Phi(r)}\,\left( \frac{n}{\Phi(r)}\right)^{n-i}.
\end{eqnarray}
\end{corollary}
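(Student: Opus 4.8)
The corollary is a direct specialization of Proposition~\ref{re} to spectrally negative $X$, the only genuine simplification being that $M_T$ is now exponentially distributed. The first step is to note that in this case the global exponential-moment hypothesis (\ref{b1}) may be relaxed to $\gamma\ge 0$: since $I_T\le 0$ a.s. we have $\E(\e^{\gamma I_T})\le 1<\infty$ for every $\gamma\ge 0$; the Laplace transform $\widehat q_n(\gamma,x_n^*)$ in (\ref{q1}) converges for $\gamma>0$ because $Q_n^{(M)}$ is a polynomial; and $\psi(\gamma)$ is finite for all $\gamma\ge 0$ because $\Pi$ is carried by $(-\infty,0)$, so that $\e^{\gamma x}\le 1$ on its support. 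Hence the Fubini and change-of-variables steps in the proof of Proposition~\ref{re} remain valid and (\ref{req3}) holds for $\gamma>0$.

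Next, the plan is to compute $\E(\e^{\gamma I_T})$ from the Wiener--Hopf factorization (\ref{wh}), written multiplicatively as $\E(\e^{\gamma X_T})=\E(\e^{\gamma M_T})\,\E(\e^{\gamma I_T})$. Using $\E(\e^{\gamma X_T})=r/(r-\psi(\gamma))$ together with the fact that $M_T\sim\mathrm{Exp}(\Phi(r))$, so that $\E(\e^{\gamma M_T})=\Phi(r)/(\Phi(r)-\gamma)$ for $0\le\gamma<\Phi(r)$, division gives
\[
\E(\e^{\gamma I_T})=\frac{r(\Phi(r)-\gamma)}{\Phi(r)(r-\psi(\gamma))},
\]
which is exactly (\ref{b11}).

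Substituting (\ref{b11}) into (\ref{req3}) the factor $r$ cancels and one is left with
\[
\widehat\sigma_n(\gamma)=\widehat q_n(\gamma,x_n^*)\,\frac{\Phi(r)\,(r-\psi(\gamma))}{\Phi(r)-\gamma},
\]
which is the first assertion; the apparent pole at $\gamma=\Phi(r)$ is removable since $r-\psi(\Phi(r))=0$ by definition of $\Phi(r)$. For the second assertion I would start from the general identity (\ref{q1}) and insert the spectrally negative Appell polynomials (\ref{sn1}), $Q^{(M)}_k(x)=(x-k/\Phi(r))x^{k-1}$, evaluated at their positive root $x_n^*=n/\Phi(r)$; a one-line computation of the numbers $Q^{(M)}_{n-i}(x_n^*)$ then produces (\ref{q11}).

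I do not expect a substantive obstacle here: everything follows from Proposition~\ref{re} together with the elementary exponential law of $M_T$ for spectrally negative processes. The only two points deserving an explicit word are the widening of the admissible set of $\gamma$ to $[0,\infty)$, justified in the first step, and the removability of the singularity of the quotient at $\gamma=\Phi(r)$; both are routine.
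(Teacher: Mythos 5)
Your proposal follows the paper's own route exactly: the paper likewise relaxes the exponential-moment assumption using the exponential law of $M_T$, derives (\ref{b11}) from the Wiener--Hopf factorization $\E(\e^{\gamma X_T})=\E(\e^{\gamma M_T})\E(\e^{\gamma I_T})$, substitutes it into (\ref{req3}), and evaluates $Q^{(M)}_{n-i}$ at $x_n^*=n/\Phi(r)$ via (\ref{sn1}). One caveat if you actually carry out that last one-line computation: $Q^{(M)}_{n-i}(x_n^*)=\frac{i}{\Phi(r)}\bigl(\frac{n}{\Phi(r)}\bigr)^{n-i-1}$, so the exponent $n-i$ printed in (\ref{q11}) appears to be off by one --- a slip already present in the paper's preceding remark that $Q^{(M)}_k(x^*_n)=(n-k)n^k/(\Phi(r))^k$, which should read $(n-k)n^{k-1}/(\Phi(r))^k$.
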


For spectrally positive L\'evy processes we do not, in general, have any explicit knowledge of the Appell polynomials $Q^{(M)}$ and, hence, also the value function $V$ remains hidden. However, in this case we may charaterize $\sigma_n$ nicely. This is possible since now $-I_T$ is exponentially distributed and we see from (\ref{qq}), roughly speaking, that $Q^{(M)}$ is the Laplace transform of $\sigma_n.$ We obtain also a new expression for the value function. 
\begin{corollary} 
\label{sigma_sn}
If $X$ is spectrally positive then 
\begin{equation}
\label{nice}
\sigma(dx)=r\,Q^{(X)}_n(x)dx,\quad  x\geq x^*_n,  
\end{equation}
and, hence,
\begin{equation}
\label{vnice}
V(x)=\E\left(Q^{(X)}_n(X_T+x)\,;\,X_T+x\geq x^*_n  \right). 
\end{equation}
\end{corollary}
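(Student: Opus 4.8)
The plan is to solve the integral equation (\ref{qq}) in closed form, using that for a spectrally positive $X$ the infimum $-I_T$ is exponentially distributed, and then to recognize the density so obtained as $r\,Q^{(X)}_n$ by means of the Wiener--Hopf factorization.

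First I would set $\lambda:=\hat\Phi(r)$ and insert the density of $I_T$, namely $f_I(u)=\lambda\,\e^{\lambda u}$ for $u<0$, into (\ref{qq}), which then reads
\[
Q^{(M)}_n(z)=\frac{\lambda}{r}\,\e^{-\lambda z}\int_{x_n^*}^{z}\e^{\lambda y}\,\sigma_n(dy),\qquad z\geq x_n^*,
\]
i.e.\ $\int_{x_n^*}^{z}\e^{\lambda y}\,\sigma_n(dy)=\frac{r}{\lambda}\,\e^{\lambda z}\,Q^{(M)}_n(z)$. The right-hand side is $C^1$ and vanishes at $z=x_n^*$ (since $x_n^*$ is a root of $Q^{(M)}_n$), so $\sigma_n$ is absolutely continuous on $[x_n^*,\infty)$ with no atom at the left endpoint; differentiating and using the recursion $\frac{d}{dz}Q^{(M)}_n(z)=n\,Q^{(M)}_{n-1}(z)$ of (\ref{appellx}) I would get
\[
\sigma_n(dz)=r\Bigl(Q^{(M)}_n(z)+\frac{n}{\lambda}\,Q^{(M)}_{n-1}(z)\Bigr)\,dz,\qquad z\geq x_n^*.
\]
Alternatively, to sidestep the differentiation I could start from the Laplace transform formula (\ref{req3}) of Proposition \ref{re}: here $\E(\e^{\gamma I_T})=\lambda/(\lambda+\gamma)$, and an integration by parts in $\widehat q_n(\gamma,x_n^*)$ removes the boundary term because $Q^{(M)}_n(x_n^*)=0$, leaving $\widehat\sigma_n(\gamma)=r\int_{x_n^*}^{\infty}\e^{-\gamma z}\bigl(Q^{(M)}_n(z)+\frac{n}{\lambda}Q^{(M)}_{n-1}(z)\bigr)\,dz$, whence the same conclusion by uniqueness of Laplace transforms.

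Next I would identify the bracket with $Q^{(X)}_n$. By the Wiener--Hopf factorization $X_T\stackrel{d}{=}M_T+I'_T$ with independent summands, identity (\ref{appell2}) with second argument $0$ gives $Q^{(X)}_n(z)=\sum_{k=0}^{n}\binom{n}{k}Q^{(M)}_k(z)\,Q^{(I)}_{n-k}(0)$; but by (\ref{sp1}) the only nonzero values of $Q^{(I)}_m(0)$ are $Q^{(I)}_0(0)=1$ and $Q^{(I)}_1(0)=1/\lambda$, so the sum collapses to $Q^{(X)}_n(z)=Q^{(M)}_n(z)+\frac{n}{\lambda}\,Q^{(M)}_{n-1}(z)$, which is (\ref{nice}). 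Positivity of $\sigma_n$ is then automatic: $Q^{(M)}_n(z)\geq0$ for $z\geq x_n^*$ (the largest positive root, with positive leading coefficient; cf.\ (\ref{app_neg})) and likewise $Q^{(M)}_{n-1}(z)\geq0$ for $z\geq x_n^*\geq x_{n-1}^*$. Finally, for (\ref{vnice}) I would substitute $\sigma_n(dy)=r\,Q^{(X)}_n(y)\,dy$ into the Riesz representation (\ref{excessive}) and use that for a L\'evy process the duality measure $m$ may be taken to be Lebesgue measure, so that $\P_x(X_T\in dy)=r\,G_r(x,y)\,dy$; this turns (\ref{excessive}) into $V(x)=\int_{x_n^*}^{\infty}Q^{(X)}_n(y)\,\P_x(X_T\in dy)=\E_0\bigl(Q^{(X)}_n(X_T+x)\,;\,X_T+x\geq x_n^*\bigr)$.

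I expect the only genuinely delicate point to be the passage from the integral equation to an honest density for $\sigma_n$, together with the behaviour at the endpoint $x_n^*$ --- which is exactly where $Q^{(M)}_n(x_n^*)=0$ is used; everything else is routine integration. The conceptual crux is the two-term collapse of (\ref{appell2}), forced by $Q^{(I)}_m(0)=0$ for $m\geq2$, i.e.\ by the exponential law of $-I_T$.
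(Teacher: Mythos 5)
Your proposal is correct and follows essentially the same route as the paper: there the series in (\ref{req31}) collapses to $r\,\widehat q_n+\frac{r}{\hat\Phi(r)}\gamma\,\widehat q_n$ because $Q^{(I)}_m(0)=0$ for $m\geq 2$, the factor $\gamma\,\widehat q_n$ is inverted using $Q^{(M)}_n(x_n^*)=0$ together with (\ref{appellx}), and the resulting two-term sum is identified with $r\,Q^{(X)}_n$ via (\ref{appell2}) exactly as you do. Your primary route (differentiating the integral equation with the explicit exponential density of $I_T$) is only a cosmetic variant of this, and your ``alternative'' via the Laplace transform is precisely the paper's argument.
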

\begin{proof}
The Appell polynomials for $I_T$ are given explicitly in (\ref{sp1}), and  we have $Q_0^{(I_T)}(0)=1,$ $Q_1^{(I_T)}(0)=1/\hat\Phi(r),$ and $Q_n^{(I_T)}(0)=0$ for $n=2,3,\dots$. From (\ref{req31}) we obtain
\begin{equation}
\label{eq5}
\widehat\sigma(\gamma)=r\,
 \widehat{q}(\gamma)+ \frac{r}{\hat\Phi(r)}\,\gamma\,\widehat{q}(\gamma).
\end{equation}
Consider
\begin{eqnarray*}
&&
\gamma\,\widehat{q}(\gamma)=
\int_{x^*}^{\infty}\gamma\, \e^{-\gamma z}Q_n^{(M)}(z)\, dz
\\
&&
\hskip1.2cm
=
\int_{x^*}^{\infty}\gamma\, \e^{-\gamma z}\left (\int_{x^*}^{\infty} \frac{d}{dx}Q_n^{(M)}(x)\,dx\right )\, dz
\\
&&
\hskip1.2cm
=
\int_{x^*}^{\infty} \frac{d}{dx}Q_n^{(M)}(x)\, \e^{-\gamma x}\, dx 
\\
&&
\hskip1.2cm
=
\int_{x^*}^{\infty} n\,Q_{n-1}^{(M)}(x)\, \e^{-\gamma x}\, dx,
\end{eqnarray*}
where we have changed the order of integration, used the fact $x_n^*$ is a zero of $Q_n^{(M)},$ and applied the differentiation formula (\ref{appellx}). Consequently, we have done the inversion and it holds
 \begin{eqnarray*}
&& 
\sigma(z)=r Q^{(M)}_n(z)+ n \frac{r}{\hat\Phi(r)}Q^{(M)}_{n-1}(z) 
\\ 
&&
\hskip1cm
=r \left( Q^{(M)}_n(z)Q^{(I)}_0(0)+ n Q^{(M)}_{n-1}(z)Q^{(I)}_1(0)\right)  
\\ 
&&
\hskip1cm
=r \sum_{k=0}^{n} {n\choose k}Q^{(M)}_{n-k}(z)Q^{(I)}_{k}(0)
\\ 
&&
\hskip1cm
=r\,Q^{(X)}_{n}(z), 
 \end{eqnarray*}
since $Q^{(I)}_{k}(0)=0$ for $k=2,3,\dots$ and, in the last step, formula (\ref{appell2}) is applied.  
\end{proof}

We conclude by studying Brownian motion from the point of view of Corollary 
\ref{sigma_sn}.
\begin{example} The resolvent kernel with respect to the Lebesgue measure is given by
(see, e.g., Borodin and Salminen \cite{borodinsalminen02} p. 120)
\[ 
G_r(x,y)=\frac{1}{\sqrt{2 r}}\ \e^{-\sqrt{2 r}\vert x-y\vert},
 \]
and, from Example \ref{ex2}, $x^*_n=n/\sqrt{2r}$ and  
\[ 
Q^{(X_T)}_{n}(x)=\left( x^2-\frac{n(n-1)}{2r}\right) x^{n-2}.
 \] 
It is straightforward to check that $V$ as given in Example \ref{ex_sn} satisfies 
 \begin{eqnarray*}
&& 
V(x)=r\int_{x^*_n}^\infty G_r(x,y)\,Q_n^{(X_T)}(y)dy.
 \\
 &&
 \hskip1cm
 =\E_x\left(Q^{(X_T)}_{n}(X_T)\,;\,X_T\geq x^*_n\right). 
 \end{eqnarray*}

\end{example}

\bibliographystyle{plain}
\bibliography{pape1}

\begin{thebibliography}{10}

\bibitem{abramowitzstegun70}
M.~Abramowitz and I.~Stegun.
\newblock {\em Mathematical functions, 9th printing}.
\newblock Dover publications, Inc., New York, 1970.

\bibitem{bertoin96}
J.~Bertoin.
\newblock {\em L\'evy Processes}.
\newblock Cambridge University Press, Cambridge, 1996.

\bibitem{borodinsalminen02}
A.N. Borodin and P.~Salminen.
\newblock {\em Handbook of {B}rownian Motion -- Facts and Formulae, 2nd
  edition}.
\newblock Birkh\"auser, Basel, Boston, Berlin, 2002.

\bibitem{darlingliggettaylor72}
D.A. Darling, T.~Liggett, and H.M. Taylor.
\newblock {Optimal stopping for partial sums.}
\newblock {\em Ann. Math. Stat.}, 43:1363--1368, 1972.

\bibitem{kyprianou06}
A.~Kyprianou.
\newblock {\em Introductory Lectures on Fluctuations of L\'evy processes with
  applications}.
\newblock Springer Verlag, Berlin, Heidelberg, 2006.

\bibitem{kyprianousurya05}
A.~E. Kyprianou and B.~A. Surya.
\newblock On the {N}ovikov-{S}hiryaev optimal stopping problems in continous
  time.
\newblock {\em Electronic Communications in Probability}, 10:146--154, 2005.

\bibitem{mordecki02}
E.~Mordecki.
\newblock {Optimal stopping and perpetual options for {L}\'evy processes.}
\newblock {\em Finance Stoch.}, 6(4):473--493, 2002.

\bibitem{mordeckisalminen07}
E.~Mordecki and P.~Salminen.
\newblock Optimal stopping of {H}unt and {L}\'evy processes.
\newblock {\em Stochastics. An International Journal of Probability and
  Stochastic Processes}, 79(3-4):233--252, 2007.

\bibitem{novikovshiryayev07}
A.~Novikov and Shiryaev A.N.
\newblock On a solution of the optimal stopping problem for processes with
  independent increments.
\newblock {\em Stochastics. An International Journal of Probability and
  Stochastic Processes}, 2007.

\bibitem{novikovshiryaev04}
A.~Novikov and A.N. Shiryaev.
\newblock On an effective solution of the optimal stopping problem for random
  walks.
\newblock {\em Th. Probab. Appl.}, 49:373--382, 2004.

\bibitem{schoutens00}
W.~Schoutens.
\newblock {\em Stochastic processes and ortogonal polynomials}.
\newblock Number 146 in Springer Lecture Notes in Statistics. Springer-Verlag,
  Berlin, Heidelberg, 2000.

\end{thebibliography}

\end{document}